\theoremstyle{plain}
\newtheorem{theorem}{Theorem}
\newtheorem{corollary}[theorem]{Corollary}
\newtheorem{lemma}[theorem]{Lemma}
\theoremstyle{definition}
\newtheorem{example}[theorem]{Example}
\renewcommand{\geq}{\geqslant}
\renewcommand{\leq}{\leqslant}
\renewcommand{\ge}{\geqslant}
\renewcommand{\le}{\leqslant}
\newcommand{\lhdeq}{\trianglelefteqslant}    
\newcommand{\rhdeq}{\trianglerighteqslant}   
\newcommand{\C}{{\mathcal C}}
\newcommand{\eps}{\varepsilon}
\newcommand{\F}{\mathbb{F}}
\newcommand{\Z}{\mathsf{Z}}
\newcommand{\alt}{\mathrm{Alt}}
\newcommand{\sym}{\mathrm{Sym}}
\newcommand{\CC}{\mathrm{C}}
\newcommand{\Aut}{\mathrm{Aut}}
\newcommand{\Inn}{\mathrm{Inn}}
\newcommand\GammaL{\Gamma\mathrm{L}}
\newcommand{\GL}{\mathrm{GL}}
\newcommand{\GO}{\mathrm{GO}}
\newcommand{\Out}{\mathrm{Out}}
\newcommand{\PGL}{\mathrm{PGL}}
\newcommand{\PSL}{\mathrm{PSL}}
\newcommand{\PSU}{\mathrm{PSU}}
\newcommand{\SL}{\mathrm{SL}}
\newcommand{\Sp}{{\mathrm{Sp}}}
\newcommand{\GSp}{{\mathrm{GSp}}}
\newcommand{\GU}{\mathrm{GU}}
\newcommand{\SU}{\mathrm{SU}}
\begin{document}


\title[$\textup{\sc composition factors of order $p$}$]{The number of composition factors of order $p$ in completely reducible groups of characteristic $p$}

\author[M.~Giudici, S.~P.~Glasby, C.~H.~Li, G.~Verret]{Michael Giudici, S.~P.~Glasby, Cai Heng Li, Gabriel Verret} 

\address{Michael Giudici, S.~P.~Glasby$^*$ and Gabriel Verret$^\dag$,
  \newline\indent Centre for the Mathematics of Symmetry and Computation, 
  \newline\indent The University of Western Australia, 
  \newline\indent 35 Stirling Highway, Crawley, WA 6009, Australia.} 
  
\address{Cai Heng Li, Department of Mathematics, 
  South University of Science and
  \newline\indent Technology of China,
  Shenzhen, Guangdong 518055, P. R. China.}
\
\address{$^*$Also affiliated with The Department of Mathematics, 
  \newline\indent University of Canberra, ACT 2601, Australia.}
  
\address{$\dag$Also affiliated with FAMNIT, University of Primorska, 
\newline\indent Glagolja\v{s}ka 8, SI-6000 Koper, Slovenia.
\newline\indent Current address: Department of Mathematics, The University of Auckland,
\newline\indent Private Bag 92019, Auckland 1142, New Zealand.}

\email{Michael.Giudici@uwa.edu.au; URL: www.maths.uwa.edu.au/$\sim$giudici/}
\email{Stephen.Glasby@uwa.edu.au; URL: www.maths.uwa.edu.au/$\sim$glasby/}
\email{lich@sustc.edu.cn; URL: www.sustc.edu.cn/en/math\_faculty/f/CAIHENGLI}
\email{g.verret@auckland.ac.nz}

\thanks{This research was supported by the Australian Research Council grants DE130101001 and DP150101066.}

\subjclass[2010]{20C33, 20E34} 


\keywords{Composition factors, completely reducible}

\begin{abstract}
Let $q$ be a power of a prime $p$ and let $G$ be a completely reducible subgroup of $\GL(d,q)$. We prove that the number of composition factors of $G$ that have prime order~$p$ is at most $(\eps_q d-1)/(p-1)$, where $\eps_q$ is a function of $q$ satisfying $1\le\eps_q\le3/2$. For every $q$, we give examples showing this bound is sharp infinitely often. 
\end{abstract}

\maketitle

\section{Introduction}\label{S1}

All groups considered in this paper are finite. Given a group $G$ and a prime $p$, let $c_p(G)$ denote the number of composition factors of $G$ of order~$p$. Our main theorem is the following.

\begin{theorem}\label{T1}
Let $q$ be a power of a prime $p$, say $q=p^f$. If $G$ is a completely reducible subgroup of $\GL(d,q)$ with $r$ irreducible components, then
\begin{equation}\label{E1}
   c_p(G)\le\frac{\eps_qd-r}{p-1},
    \quad\textup{where }\eps_q= 
\begin{cases}
    \frac{4}{3}&\textup{if $p=2$ and $f$ is even,}\\
    \frac{p}{p-1}&\textup{if $p$ is a Fermat prime,}\\
    1&\textup{otherwise.}
\end{cases}
\end{equation}
\end{theorem}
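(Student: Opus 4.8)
The plan is to prove the bound by induction on $d$, after isolating two elementary facts about the additive invariant $c_p$. First I record that $c_p$ is additive along normal series, so $c_p(G)=c_p(N)+c_p(G/N)$ for $N\trianglelefteq G$, and in particular $c_p(N)\le c_p(G)$ whenever $N\trianglelefteq G$. From this I extract the key combinatorial tool: if $G$ is a subdirect subgroup of a direct product $H_1\times\cdots\times H_m$ (every projection $G\to H_j$ surjective), then $c_p(G)\le\sum_{j=1}^m c_p(H_j)$. One proves this by induction on $m$: with $N=\ker(G\to H_1)$ one has $c_p(G)=c_p(H_1)+c_p(N)$, and $N$ is subdirect in $\prod_{j\ge2}M_j$ where $M_j=\mathrm{im}(N\to H_j)\trianglelefteq H_j$, so $c_p(N)\le\sum_{j\ge2}c_p(M_j)\le\sum_{j\ge2}c_p(H_j)$ by normal monotonicity. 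I also record the structural fact that a completely reducible subgroup of $\GL(d,q)$ in defining characteristic satisfies $O_p(G)=1$: a normal $p$-subgroup acts trivially on every irreducible constituent, since the only irreducible $\F_q[P]$-module for a $p$-group $P$ is trivial, hence it acts trivially on all of $V$.

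Next I reduce to the irreducible case. Writing $V=V_1\oplus\cdots\oplus V_r$ with each $V_i$ irreducible of dimension $d_i$, and letting $G_i\le\GL(V_i)$ be the image of $G$, the group $G$ is subdirect in $\prod_i G_i$; by the subdirect lemma $c_p(G)\le\sum_i c_p(G_i)$, so it suffices to prove the irreducible bound $c_p(G_i)\le(\eps_q d_i-1)/(p-1)$ and sum to obtain $(\eps_q d-r)/(p-1)$. For irreducible $G$ I induct on $d$ and split off the imprimitive case. If $G$ preserves $V=W_1\oplus\cdots\oplus W_m$ with blocks of dimension $e=d/m$ permuted transitively, let $K$ be the kernel of the block action and $H_j\le\GL(W_j)$ the restriction of $K$ to $W_j$. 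Then $K$ is subdirect in $\prod_j H_j$, each $H_j$ is normal in the (irreducible) block-stabiliser image and hence completely reducible of dimension $e$, while $G/K$ is a transitive subgroup of $\sym(m)$. Using $c_p(G/K)\le\nu_p(|G/K|)\le\nu_p(m!)\le(m-1)/(p-1)$, where $\nu_p$ denotes the exponent of $p$, together with the inductive bound on each $H_j$, gives $c_p(G)\le m\frac{\eps_q e-1}{p-1}+\frac{m-1}{p-1}=\frac{\eps_q d-1}{p-1}$, disposing of every imprimitive irreducible group and leaving the primitive case.

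The primitive case is the crux. For primitive $G$ I analyse $F^{*}(G)=F(G)E(G)$. Since $O_p(G)=1$, the Fitting subgroup $F(G)=O_{p'}(G)$ is a $p'$-group, and the central parts of the components lie in $F(G)$, so $E(G)$ also contributes no factors of order $p$; hence $c_p(F^{*}(G))=0$ and $c_p(G)=c_p(G/F^{*}(G))$ with $G/F^{*}(G)\hookrightarrow\Out(F^{*}(G))$. Thus every composition factor of order $p$ must arise from outer automorphisms: the permutation action on the set of components (an $\sym(k)$-type contribution, bounded as above), field automorphisms of the quasisimple components (which contribute precisely when $p\mid f$, producing the linear growth in $d$), and the action on the $p'$-group $F(G)$ (the symplectic-type and extraspecial-normaliser configurations). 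Bounding the number of order-$p$ factors of $\Out(F^{*}(G))$ against $d$, via the Aschbacher structure of primitive linear groups and the known orders of the diagonal, field and graph automorphisms of the finite simple groups, is the main obstacle. It is here that the generic constant $\eps_q=1$ appears, and it is here that the exceptional primitive configurations forcing $\eps_q=p/(p-1)$ for Fermat primes and $\eps_q=4/3$ for $p=2$ with $f$ even must be singled out and verified to be extremal.

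Finally, for sharpness one exhibits small primitive groups meeting the bound — for instance $\SL(2,2)\cong\sym(3)$ when $p=2$ and $\SL(2,3)$ when $p=3$, with analogous small configurations supplying the $4/3$ and $p/(p-1)$ values — and propagates them into infinite families through the direct-sum and wreath constructions, for which all the inequalities derived above become equalities.
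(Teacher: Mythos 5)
Your preliminary reductions are correct and coincide with the paper's own handling of the reducible and imprimitive cases (Aschbacher classes $\C_1$ and $\C_2$): the subdirect-product bound, the reduction to irreducible $G$, and the block-decomposition argument with the $(m-1)/(p-1)$ bound for subgroups of $\sym(m)$ all appear in the paper in essentially the same form. The genuine gap is that the primitive case --- which you yourself call ``the crux'' and ``the main obstacle'' --- is never actually proved. Setting up $F^*(G)$, observing $c_p(F^*(G))=0$ and $G/F^*(G)\hookrightarrow\Out(F^*(G))$, is only the first line of that analysis; the entire content of the theorem is the bound on the order-$p$ composition factors of this outer-automorphism section in terms of $d$, and your proposal replaces that bound with a description of what would have to be done. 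In the paper this occupies the remaining Aschbacher classes $\C_3$--$\C_9$ and is where every difficulty and every exceptional value of $\eps_q$ lives: (i) the symplectic-type case $\C_6$, where one must prove $\log_p|\Sp(2m,r)|_p\le(\eps_q r^m-1)/(p-1)$ via a number-theoretic analysis of $(r^{2i}-1)_p$, and where the Fermat-prime value $p/(p-1)$ and the value $4/3$ for $p=2$ with $f$ even are actually forced; (ii) the defining-characteristic almost simple case, where the field-automorphism contribution is controlled only by a genuinely nontrivial argument --- constructing an element of order $p^{\ell+1}$ as a field automorphism times a root element via the trace map, and comparing with the exponent $p^{\lceil\log_p d\rceil}$ of a Sylow $p$-subgroup of $\GL(d,q)$; (iii) the cross-characteristic almost simple case, which requires the Landazuri--Seitz lower bounds on projective representation degrees together with the Hiss--Malle tables for $d\le 5$. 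None of these steps, nor any substitute for them, appears in your proposal, so the inequality is simply not established for primitive groups, and hence not established at all.

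Two smaller points. Your parenthetical that field automorphisms of the quasisimple components ``contribute precisely when $p\mid f$'' is false: the group $\GammaL(1,p^p)\le\GL(p,p)$ (the paper's first sharpness example) has $f=1$, yet $c_p=1$, the factor of order $p$ arising from a field automorphism of $\F_{p^p}$; the relevant fields are extension fields of $\F_q$ (or fields of definition of components), not divisors of $f$. The same example shows that your taxonomy of contributions from the action on $F(G)$ (``symplectic-type and extraspecial-normaliser configurations'') omits normalisers of tori, i.e.\ the Singer-cycle/extension-field situation. Relatedly, the paper's induction is on the pair $(d,f)$ in lexicographic order, not on $d$ alone, precisely because subfield subgroups ($\C_5$) leave $d$ unchanged while decreasing $f$; whether your $F^*$-based route could avoid this cannot be judged, since that part of the argument was not carried out.
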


Recall that a \emph{Fermat prime} is a prime of the form $2^{2^n}+1$ for some $n\ge0$, and that a subgroup $G$ of $\GL(V)$ is called \emph{completely reducible} if $V$ is a direct sum of irreducible $G$-modules.

Our motivation for Theorem~\ref{T1} arose from studying transitive permutation groups admitting paired orbitals with non-isomorphic subconstituents. In the case when both subconstituents are quasiprimitive, Knapp proved that one must be an epimorphic image of the other~\cite[Theorem~3.3]{Knapp}. This naturally led us to investigate the question of when a quasiprimitive group can be a non-trivial epimorphic image of another quasiprimitive group of the same degree. In an upcoming paper~\cite{GGLV}, we show that this is very rare. Our proof relies on Theorem~\ref{T1} in the case when both quasiprimitive groups are of affine type. 

Let $G$ be a group and let  $a(G)$ be the product of the orders of the abelian composition factors of $G$. Note that $c_p(G)\le\log_p(a(G))$ so upper bounds on $a(G)$ yield upper bounds on $c_p(G)$. It is proved in~\cite[Theorem~6.5]{GMP} that, if $G$ is a completely reducible subgroup of $\GL(d,p^f)$,  then $a(G)\le \beta^{-1}(p^{fd})^\gamma$ where $\beta=24^{1/3}$ and $\gamma=\log_9(48\beta)$, and thus $c_p(G)\le \gamma df-\log_p\beta< \gamma df$.  Our bound improves on this because it is independent of~$f$, it
involves the denominator $p-1$, and  $\eps_q<\gamma\approx 2.244$. Similarly, if  $G$ is a primitive group of degree $n$  and $p\mid n$, then it follows from~\cite[Corollary~6.7]{GMP} that $a(G)\le\beta^{-1}n^{\gamma+1}$ and hence $c_p(G)\le(\gamma+1)\log_p(n)$,  whereas our result implies $c_p(G)\le d+\frac{\eps_pd-1}{p-1}$ if $G$ is primitive of affine type and degree $n = p^d$.

After some preliminary results in Section~\ref{S2}, we exhibit some examples in Section~\ref{sec:Examples} which show that, for every prime power $q$, Theorem~\ref{T1} is sharp infinitely often. In particular, $\eps_q$ is best possible. The bound in Theorem~\ref{T1} can be sharpened (if $q$ is not an odd power of 2) to $\eps_p(G)\le\frac{\eps_q d-s}{p-1}$ where~$s$ is the number of \emph{absolutely} irreducible components of $G$ since $G$ remains completely reducible over the algebraic closure of $\F_q$ by~\cite[\S VII.2]{HB2}.

The proof of Theorem~\ref{T1} is given in Section~\ref{S3}. The main idea is to use induction on~$d$ and then split into cases, according to Aschbacher's classification of the subgroups of $\GL(d,p^f)$. The hardest case is when $G$ is a projectively almost simple absolutely irreducible `$\C_9$ group' with a `non-geometric' linear action. We conclude with Corollary~\ref{Cor2}, which bounds $c_p(G/O_p(G))$ for $G$ an arbitrary subgroup of $\GL(d,p^f)$.

\section{Preliminaries}\label{S2}
Throughout the paper, $p$ will always denote a prime. Given a positive integer $n$, let $n_p$ denote the highest power of $p$ that divides $n$ and let $\CC_n$ denote a cyclic group of order~$n$. By Clifford's theorem~\cite{Cliff}, a normal subgroup of a completely reducible group is also completely reducible. This fact will be used repeatedly. The following lemmas will also be used repeatedly, sometimes without comment. 

\begin{lemma}\label{Lemma:new}\label{lem:n!}
If $r$ is a positive integer, then $\log_p r_p\le \log_p(r!)_p\leqslant (r-1)/(p-1)$.
\end{lemma}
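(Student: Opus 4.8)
The plan is to recognise that $\log_p r_p$ is just the $p$-adic valuation of $r$; write $v_p(n)=\log_p n_p$ for brevity, so that $\log_p(r!)_p = v_p(r!)$. The statement then splits into the two inequalities $v_p(r)\le v_p(r!)$ and $v_p(r!)\le (r-1)/(p-1)$, which I would treat separately.

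The left-hand inequality is immediate. Since $r$ appears as one of the factors in the product $r! = 1\cdot 2 \cdots r$, we have $r\mid r!$, and therefore $v_p(r)\le v_p(r!)$. No further work is needed here.

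For the right-hand inequality I would invoke Legendre's formula $v_p(r!) = (r - s_p(r))/(p-1)$, where $s_p(r)$ denotes the sum of the base-$p$ digits of $r$. As $r$ is a positive integer its base-$p$ expansion is non-empty, so $s_p(r)\ge 1$; substituting this bound into Legendre's formula yields $v_p(r!)\le (r-1)/(p-1)$, as required. I do not expect any serious obstacle: the only substantive ingredient is the digit-sum form of Legendre's formula, and the inequality $s_p(r)\ge 1$ is trivial.

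If one prefers to avoid quoting the digit-sum identity, the same conclusion follows from the floor version $v_p(r!) = \sum_{i\ge 1}\lfloor r/p^i\rfloor$: bounding each summand above by $r/p^i$ and summing the resulting geometric series gives $v_p(r!) < r/(p-1)$, the inequality being strict because the terms with $p^i>r$ contribute $0$ rather than a positive quantity. Since $v_p(r!)$ is an integer, this strict bound forces $(p-1)v_p(r!)\le r-1$. The only point demanding a little care in this alternative is precisely this passage from the strict real inequality to the integer bound; everything else is routine.
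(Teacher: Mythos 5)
Your proof is correct and takes essentially the same route as the paper: the first inequality follows from $r\mid r!$, and the second from Legendre's digit-sum formula $\log_p(r!)_p=(r-s_p(r))/(p-1)$ together with $s_p(r)\ge1$. The alternative floor-sum argument you sketch (bounding $\sum_{i\ge1}\lfloor r/p^i\rfloor$ by the geometric series and then using integrality to pass from the strict bound to $(r-1)/(p-1)$) is also valid, but it is only a minor variant of the same idea.
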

\begin{proof}
The first inequality is obvious. Consider the $p$-adic expansion $r=\sum_{k\ge0} d_kp^k$ of $r$, with `digits' $d_k\in\{0,1,\dots,p-1\}$ for each $k\ge0$. Legendre proved  that $\log_p(r!)_p=\sum_{k\ge1}\lfloor r/p^k\rfloor=(r-s_p(r))/(p-1)$, where $s_p(r)=\sum_{k\ge0} d_k$. The second inequality follows since $s_p(r)\ge1$.
\end{proof}

\begin{lemma}\label{L1} Let $G$ be a group.
\begin{enumerate}[{\rm (a)}]
\item\label{L1a} If $1=G_m\lhdeq G_{m-1}\lhdeq\cdots\lhdeq G_0=G$ is a subnormal series for $G$, then $c_p(G)=\sum_{i=1}^mc_p(G_{i-1}/G_i)$. 
\item\label{L1x} $c_p(G)\leq \log_p|G|_p$. If $G$ is $p$-soluble, then $c_p(G)=\log_p|G|_p$. 
\item\label{L1e}\label{L1d} If $G$ is a subgroup of a group $\Gamma$, then $c_p(G)\le\log_p|\Gamma|_p$. In particular, if $G\le\sym(r)$, then $c_p(G)\le(r-1)/(p-1)$.
\item\label{L1b} If $G$ is a subgroup of a direct product $H_1\times\cdots\times H_r$ where the projection maps $\pi_i\colon G\to H_i$ are surjective for $1\le i\le r$, then $c_p(G)\le c_p(H_1)+\cdots+c_p(H_r)$.
\item\label{L1c} If $G$ is a subgroup of a central product $H_1\circ\cdots\circ H_r$ where the projection maps $G\to H_i$ are surjective for $1\le i\le r$, then $c_p(G)\le c_p(H_1)+\cdots+c_p(H_r)$.
\end{enumerate}
\end{lemma}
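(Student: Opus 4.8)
The plan is to prove the five parts roughly in order, with part~(a) serving as the engine for the rest. For part~(a), I would refine the given subnormal series to a composition series by inserting, between consecutive terms $G_i\lhdeq G_{i-1}$, a composition series of the section $G_{i-1}/G_i$; the Jordan--H\"older theorem then guarantees that the multiset of composition factors of $G$ is the disjoint union of the multisets of composition factors of the sections $G_{i-1}/G_i$. Counting only the factors isomorphic to $\CC_p$ yields $c_p(G)=\sum_{i=1}^m c_p(G_{i-1}/G_i)$. A consequence I would record immediately is that $c_p$ is monotone along normal subgroups: if $N\lhdeq H$, the series $1\lhdeq N\lhdeq H$ gives $c_p(H)=c_p(N)+c_p(H/N)\ge c_p(N)$. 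This monotonicity (which fails for arbitrary subgroups) is exactly what the later parts exploit.

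For part~(b), I would write $|G|=\prod_i|S_i|$ over a composition series $(S_i)$ and note that each factor $S_i\cong\CC_p$ contributes exactly one power of $p$ to $|G|_p$, so $\log_p|G|_p=\sum_i\log_p|S_i|_p\ge c_p(G)$. When $G$ is $p$-soluble, every composition factor is either a $p'$-group or isomorphic to $\CC_p$, so the only contributions to $|G|_p$ come from the $\CC_p$ factors and the inequality becomes equality. Part~(c) is then immediate: combining~(b) with Lagrange's theorem (so $|G|_p$ divides $|\Gamma|_p$) gives $c_p(G)\le\log_p|G|_p\le\log_p|\Gamma|_p$; taking $\Gamma=\sym(r)$ and applying Lemma~\ref{lem:n!} to $\log_p(r!)_p$ yields the stated bound $(r-1)/(p-1)$.

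The substantive step is part~(d), and this is where I expect the main (though modest) obstacle to lie. Writing $\pi_i\colon G\to H_i$ for the surjective projections, I would filter $G$ by the normal subgroups $L_j=\ker(G\to H_1\times\cdots\times H_j)$, so that $G=L_0\rhdeq L_1\rhdeq\cdots\rhdeq L_r=1$ is a (normal, hence subnormal) series with $L_{j-1}/L_j\cong\pi_j(L_{j-1})$. The crucial point---and the reason surjectivity is hypothesised---is that $\pi_j(L_{j-1})$ is not merely a subgroup but a \emph{normal} subgroup of $\pi_j(G)=H_j$: conjugating an element of $L_{j-1}$ by an arbitrary element of $G$ preserves the defining conditions $\pi_1=\cdots=\pi_{j-1}=1$. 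Granting this, the normal-subgroup monotonicity from part~(a) gives $c_p(L_{j-1}/L_j)=c_p(\pi_j(L_{j-1}))\le c_p(H_j)$, and summing via part~(a) yields $c_p(G)=\sum_j c_p(L_{j-1}/L_j)\le\sum_j c_p(H_j)$. I flag this normality verification as the one place where a naive ``$c_p$ of a subgroup'' argument would break down, since $c_p$ is not monotone under arbitrary subgroups.

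Finally, part~(e) I would reduce to part~(d). A central product $C=H_1\circ\cdots\circ H_r$ is a quotient $\rho\colon H_1\times\cdots\times H_r\to C$ by a central subgroup $Z$. Setting $\widehat G=\rho^{-1}(G)$, one has $Z\le\widehat G$ and $\widehat G/Z\cong G$, so part~(a) applied to $1\lhdeq Z\lhdeq\widehat G$ gives $c_p(G)=c_p(\widehat G)-c_p(Z)\le c_p(\widehat G)$. Since the induced direct-product projections $\widehat G\to H_i$ remain surjective (a routine check: the surjectivity hypothesis passes to the preimage $\widehat G$ because $Z\le\widehat G$), part~(d) applied to $\widehat G\le H_1\times\cdots\times H_r$ bounds $c_p(\widehat G)\le\sum_i c_p(H_i)$, completing the chain.
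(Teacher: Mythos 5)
Your proposal is correct and takes essentially the same approach as the paper: parts (a)--(c) are argued identically, your filtration $L_j=\ker(G\to H_1\times\cdots\times H_j)$ in part (d) is exactly the paper's chain $G_i=G\cap K_1\cap\cdots\cap K_i$, with your normality observation for $\pi_j(L_{j-1})\lhdeq H_j$ being the paper's second-isomorphism-theorem step $G_{i-1}/(G_{i-1}\cap K_i)\cong G_{i-1}K_i/K_i\lhdeq G/K_i\cong H_i$, and part (e) is the same lift to the direct product followed by an application of (d). There are no gaps.
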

\begin{proof} We prove these in order.

\eqref{L1a} The given subnormal series for $G$ can be refined to a composition series for $G$. The result now follows from the definition of $c_p(G)$.

\eqref{L1x} The first claim is obvious. If $G$ is $p$-soluble, then, by definition, each composition factor has order $p$, or coprime to~$p$. The result now follows from the definition of~$c_p(G)$.

\eqref{L1e} Since $|G|_p\le|\Gamma|_p$, we have $c_p(G)\le\log_p|\Gamma|_p$. The second sentence follows from Lemma~\ref{lem:n!}.

\eqref{L1b} Let $G_0=G$. For $1\le i\le r$, let $\pi_i\colon G\to H_i$ be the projection map, let $K_i=\ker(\pi_i)$ and let $G_i=G\cap K_1\cap \cdots \cap K_i$. Note that $G_r=1$. Hence $c_p(G)=\sum_{i=1}^r c_p(G_{i-1}/G_i)$ by~\eqref{L1a}. However,
$$
  \frac{G_{i-1}}{G_i}=\frac{G_{i-1}}{G_{i-1}\cap K_i}
    \cong\frac{G_{i-1}K_i}{K_i}\lhdeq\frac{G}{K_i}\cong H_i.
$$
Thus $c_p(G_{i-1}/G_i)\le c_p(H_i)$ by~\eqref{L1a}, and hence $c_p(G)\le c_p(H_1)+\cdots+c_p(H_r)$.

\eqref{L1c} Let $H=H_1\times\cdots\times H_r$ and let $N$ be a normal subgroup of $H$ such that $H/N= H_1\circ\cdots\circ H_r$. Let $\Gamma$ be the preimage of $G$ in $H$. The projection maps $\Gamma\to H_i$ are surjective, hence $c_p(\Gamma)\le\sum_{i=1}^r c_p(H_i)$ by~\eqref{L1b}. The result follows since $c_p(G)\le c_p(\Gamma)$.
\end{proof}

\section{Examples}\label{sec:Examples}
\begin{lemma}\label{L0}
Let $p$ be a prime, let $r\geq 1$, let $q$ be a prime-power and let $\Gamma_1$ be an irreducible subgroup of $\GL(r,q)$.
For every $n\geq 2$, let $\Gamma_n=\Gamma_{n-1}\wr \CC_p$. Then, for every $n\geq 2$, $\Gamma_n$ is an imprimitive subgroup of $\GL(d_n,q)$ where $d_n=rp^{n-1}$. Furthermore,
\begin{equation*}
  c_p(\Gamma_n)=\frac{\eps d_n-1}{p-1} \quad\textrm{ where} \quad\eps=\frac{c_p(\Gamma_1)(p-1)+1}{r}.
\end{equation*}
\end{lemma}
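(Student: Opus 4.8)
The plan is to induct on $n$, extracting from the wreath-product structure both the asserted imprimitive action and a recursion for $c_p(\Gamma_n)$, and then to check that the claimed closed form is the unique solution of this recursion having the correct initial value.

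First I would fix the module-theoretic picture. By induction $\Gamma_{n-1}$ is an irreducible subgroup of $\GL(d_{n-1},q)$ with $d_{n-1}=rp^{n-2}$, the base case $n=1$ being the hypothesis on $\Gamma_1$. If $W=\F_q^{d_{n-1}}$ is the natural $\Gamma_{n-1}$-module, then the imprimitive action of $\Gamma_n=\Gamma_{n-1}\wr\CC_p$ realises $\Gamma_n$ as a subgroup of $\GL(d_n,q)$ acting on $V=W_1\oplus\cdots\oplus W_p$ with each $W_i\cong W$; here the base group $B=(\Gamma_{n-1})^p$ acts componentwise and the top group $\CC_p$ cyclically permutes $W_1,\dots,W_p$. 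Thus $\dim V=p\,d_{n-1}=rp^{n-1}=d_n$, as required, and the set $\{W_1,\dots,W_p\}$ is a system of $p\ge2$ blocks permuted transitively by $\Gamma_n$.

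To complete the imprimitivity claim I must also argue that $\Gamma_n$ is irreducible, and this is the one genuinely non-formal step. I would do this with Clifford's theorem: restricted to the normal subgroup $B$, the module $V$ decomposes as $W_1\oplus\cdots\oplus W_p$, and each $W_i$ is an irreducible $B$-module (the $i$-th factor of $B$ acting as $\Gamma_{n-1}$ on $W$, the remaining factors trivially). Since $\Gamma_{n-1}$ acts nontrivially, these constituents are pairwise non-isomorphic as $B$-modules, so $V|_B$ is multiplicity-free and every nonzero $B$-submodule is a direct sum of some of the $W_i$; as $\CC_p$ permutes the $W_i$ transitively, the only $\Gamma_n$-invariant such submodules are $0$ and $V$, whence $\Gamma_n$ is irreducible.

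For the count I would use the normal series $1\lhdeq B\lhdeq\Gamma_n$ with $B=(\Gamma_{n-1})^p$ and $\Gamma_n/B\cong\CC_p$. Lemma~\ref{L1}\eqref{L1a} gives $c_p(\Gamma_n)=c_p(B)+c_p(\CC_p)$; since $c_p(\CC_p)=1$ and, applying Lemma~\ref{L1}\eqref{L1a} to the evident series of the direct product, $c_p(B)=p\,c_p(\Gamma_{n-1})$, this yields the recursion $c_p(\Gamma_n)=p\,c_p(\Gamma_{n-1})+1$. Finally, writing $f(n)=(\eps d_n-1)/(p-1)$, I would check directly that $f(1)=c_p(\Gamma_1)$ (using $\eps r=c_p(\Gamma_1)(p-1)+1$) and that $p\,f(n-1)+1=f(n)$ (using $d_n=p\,d_{n-1}$); since $c_p(\Gamma_n)$ satisfies the same recursion and initial value, induction on $n$ gives $c_p(\Gamma_n)=f(n)$ for all $n\ge1$, in particular for $n\ge2$. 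I expect the bookkeeping in this last step to be entirely routine, so the only real obstacle is the irreducibility argument of the previous paragraph.
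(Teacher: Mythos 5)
Your proposal is correct and follows essentially the same route as the paper: induction on $n$, Clifford's theorem applied to the restriction to the base group $(\Gamma_{n-1})^p$ (whose $p$ constituents are pairwise non-isomorphic, hence multiplicity-free), and the recursion $c_p(\Gamma_n)=p\,c_p(\Gamma_{n-1})+1$ from Lemma~\ref{L1}(\ref{L1a}) combined with the initial value $c_p(\Gamma_1)=(\eps r-1)/(p-1)$. The only difference is that you spell out the homogeneous-component argument and the non-isomorphism of the constituents (noting the implicit requirement that $\Gamma_{n-1}\neq 1$), which the paper compresses into a single citation of Clifford's theorem.
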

\begin{proof}
We first prove by induction that $\Gamma_n$ is an irreducible subgroup of $\GL(d_n,q)$. This is true for $n=1$. Assume now that $n\geq 2$ and $\Gamma_{n-1}$ is an irreducible subgroup of $\GL(d_{n-1},q)$. Let $V=(\F_q)^{d_n}$ be the natural
$\Gamma_n$-module. Restricting to the base group $N=(\Gamma_{n-1})^p$
of $\Gamma_n$, $V$ is a direct sum $V_1\oplus\cdots\oplus V_p$ of pairwise
nonisomorphic irreducible $N$-modules each of dimension $d_{n-1}$. Hence $\Gamma_n$ is an irreducible subgroup of $\GL(d_n,q)$ by Clifford's Theorem~\cite{Cliff}. In particular,
$\Gamma_n$ is imprimitive for $n\geq 2$. The formula for $c_p(\Gamma_n)$ is
true when $n=1$ as $d_1=r$ and $c_p(\Gamma_1)=(\eps r-1)/(p-1)$.
By Lemma~\ref{L1}(\ref{L1a}), $c_p(\Gamma_n)=pc_p(\Gamma_{n-1})+1$. Hence the
the formula for $c_p(\Gamma_n)$ also follows by induction.
\end{proof}

Using Lemma~\ref{L0}, we now give three families of examples that show that the bound in Theorem~\ref{T1} is best possible.


\begin{example}\label{Ex3}
Let $q$ be a power of a prime $p$ and let $\Gamma_1=\GammaL(1,p^p)\cong\GL(1,p^p)\rtimes \CC_p$. Note that $\Gamma_1$ is an absolutely irreducible subgroup of $\GL(p,p)$. Consequently, $\Gamma_1$ is an irreducible subgroup of $\GL(p,q)$. Note also that $c_p(\Gamma_1)=1$. Applying Lemma~\ref{L0} with $r=p$ yields, for every $n\geq 1$, an irreducible subgroup $\Gamma_n$ of $\GL(d_n,q)$ with $c_p(\Gamma_n)=(d_n-1)/(p-1)$, where $d_n=p^n$.
\end{example}

\begin{example}\label{Ex6}
Let $q$ be an even power of $2$ and let $\Gamma_1=\GU(3,2)\cong 3^{1+2}\rtimes\SL(2,3)$. Note that $\Gamma_1$ is an absolutely irreducible subgroup of $\GL(3,2^2)$. Thus, $\Gamma_1$ is an irreducible subgroup of $\GL(3,q)$. Note also  that $c_2(\Gamma_1)=3$.  Applying Lemma~\ref{L0} with $(p,r)=(2,3)$ yields, for every $n\geq 1$, an irreducible subgroup $\Gamma_n$ of $\GL(d_n,q)$ with $c_2(\Gamma_n)=(4/3)d_n-1$, where $d_n=3\cdot 2^{n-1}$. 
\end{example}

\begin{example}\label{Ex5}
Let $p=2^m+1$ be a Fermat prime, let $q$ be a power of $p$, let $E$ denote an extraspecial group of order $2^{1+2m}$ and type $-$, let $P$ be a Sylow $p$-subgroup of the orthogonal group $\textrm{GO}^{-}(2m,2)$, and let $\Gamma_1=E\rtimes P$.  Note that $\Gamma_1$ is an absolutely irreducible subgroup of $\GL(2^m,p)=\GL(p-1,p)$. Consequently, $\Gamma_1$ is an irreducible subgroup of $\GL(p-1,q)$. Note also that $|P|=p$ and $c_p(\Gamma_1)=1$. Applying Lemma~\ref{L0} with $r=p-1$ yields, for every $n\geq 1$, an irreducible subgroup $\Gamma_n$ of $\GL(d_n,q)$ with $c_p(\Gamma_n)=(\eps d_n-1)/(p-1)$, where $\eps=p/(p-1)$ and $d_n=(p-1)p^{n-1}$. 
\end{example}

The three examples above together show that, for every prime power $q$, Theorem~\ref{T1} is sharp infinitely often.
In Theorem~\ref{T1} and these examples, the prime $p$ divides the field size. If $p$ does not divide $q$, then $c_p(G)$ cannot be bounded by a function of only $d$ and $p$, as the following example shows.


\begin{example}\label{Ex9}
Let $p\ne2$ and $r$ be primes such that $r\equiv1\pmod p$, let $f$ be a positive power of $p$,  let $q=r^f$ and let $G=\GL(d,q)$. Note that $G/\SL(d,q)$ is cyclic of order $q-1$ hence $c_p(G)\geq (r^f-1)_p=(r-1)_pf_p=(r-1)_pf$.
\end{example}

\section{Proof of Theorem~\ref{T1}}\label{S3}
Let $p$ be a prime, let $f$ be a positive integer and let $q=p^f$. Let $V=(\F_q)^d$, viewed as a vector space over $\F_q$, and let $G$ be a completely reducible subgroup of $\GL(V)\cong\GL(d,q)$. It is also useful to note that $\eps_q\ge 1$.

Our proof now proceeds by induction on pairs $(d,f)$ where we use the lexicographic ordering $(d_1,f_1)<(d_2,f_2)$ if $d_1<d_2$, or $d_1=d_2$ and $f_1<f_2$. The base case when $d=f=1$ is trivial.

Since $\GL(d,q)/\SL(d,q)$ has order $q-1$ and thus coprime to~$p$, it follows by Lemma~\ref{L1}(\ref{L1a}) that $c_p(G)=c_p(G\cap\SL(d,q))$. We henceforth assume that $G\le\SL(d,q)$. Let $Z=\Z(\SL(d,q))$. Note that $Z$ has order $\gcd(d,q-1)$ which is coprime to~$p$  hence $c_p(G)=c_p(GZ)$. We thus assume henceforth that $Z\le G$.

In fact, $c_p(\SL(d,q))=0$ unless $d=2$ and $q\in\{2,3\}$, in which case $c_p(\SL(d,q))=1$. In both cases,~\eqref{E1} holds hence we assume $G<\SL(d,q)$.

Our proof relies heavily on Aschbacher's Theorem characterising the subgroups of $\GL(d,q)$ that do not contain $\SL(d,q)$, which asserts that $G$ lies in at least one of the following nine classes~\cite{A}.

\begin{itemize}
\item[$\mathcal{C}_1$] \textbf{(reducible subgroups):} In this case, $G$ fixes some proper nonzero subspace of~$V$.
\item[$\mathcal{C}_2$] \textbf{(imprimitive subgroups):} In this case,  $G$ fixes some decomposition $V=V_1\oplus\cdots\oplus V_r$, where
  $r\geq 2$ and each $V_i$ has dimension $d/r$. In particular, $G\leqslant \GL(d/r,q) \wr \sym(r)$.
\item[$\mathcal{C}_3$] \textbf{(extension field subgroups):} In this case,  $G$ preserves the structure of $V$ as a $(d/r)$-dimensional vector space over $\F_{q^r}$ for some $r\geq 2$. In this case, $G\leqslant \GL(d/r,q^r)\rtimes \CC_r$.
\item[$\mathcal{C}_4$] \textbf{(tensor product subgroups):} In this case, $G$ preserves a tensor product decomposition $V=U\otimes W$ with $d=\dim(U)\dim(W)$ and $\dim(U)\neq \dim(W)$. In particular, $G\leqslant \GL(U)\circ \GL(W)$.
\item[$\mathcal{C}_5$] \textbf{(subfield subgroups):} In this case,  $q=q_0^r$ for some $r\geq 2$ and $G\leqslant \GL(d,q_0)\cdot\Z(\GL(d,q))$.
\item[$\mathcal{C}_6$] \textbf{(symplectic type $r$-groups):} In this case,  there is a prime $r$ such that $d=r^m$ and an absolutely irreducible normal $r$-subgroup $R$ of $G$ such that $R/\Z(R)$ is elementary abelian of rank~$2m$.
\item[$\mathcal{C}_7$] \textbf{(tensor-imprimitive subgroups):} In this case,  $G$ preserves the tensor product decomposition $V=V_1\otimes \cdots\otimes V_r$, where each  $V_i$ has dimension $n$ and $d=n^r$. In particular, $G\leq (\GL(n,q)\circ \cdots\circ \GL(n,q))\rtimes \sym(r)$.
\item[$\mathcal{C}_8$] \textbf{(classical groups):} In this case,  $G$ preserves a nondegenerate alternating, hermitian or quadratic form on $V$. Moreover, $G$ contains one of $\Sp(d,q)'$, $\SU(d,\sqrt{q})$ or $\Omega^\varepsilon(d,q)$, where $\varepsilon\in\{\pm,\circ\}$. For more details, see~\S\S\ref{C8}.
\item[$\mathcal{C}_9$] \textbf{(nearly simple groups):} In this case,  $G/Z$ is an almost simple group with socle $N/Z$ such that $Z\leq N$ and $N$ is absolutely irreducible.
\end{itemize}

We now consider these classes one by one.

\subsection{$G\in\C_1$}\label{C1}

As $G\in\C_1$ is completely reducible, $G$ preserves a direct sum decomposition $V=V_1\oplus\cdots\oplus V_r$ with $r\geq 2$ where the restriction $G_i$ of $G$ to $V_i$ is irreducible.  By induction, we have $c_p(G_i)\le\frac{\eps_q d_i-1}{p-1}$ where $d_i=\dim(V_i)$ for each $i$. Since $G\le G_1\times\cdots\times G_r$ and $G$ projects onto each $G_i$, Lemma~\ref{L1}(\ref{L1b}) implies
\[
  c_p(G)\le c_p(\prod_{i=1}^rG_i)=\sum_{i=1}^rc_p(G_i)\le\sum_{i=1}^r \frac{\eps_qd_i-1}{p-1}
  =\frac{\eps_q(\sum_{i=1}^rd_i)-r}{p-1}=\frac{\eps_qd-r}{p-1}.
\]

\subsection{$G\in\C_2$}\label{C2}
In this case $G$ is irreducible and preserves a direct sum decomposition $V=V_1\oplus\cdots\oplus V_r$ with $r\geq 2$. Thus $G$ acts transitively on the set $\Omega=\{V_1,\ldots, V_r\}$. Let $N$ be the kernel of the action of $G$ on $\Omega$, and let $N_i$ denote the restriction of $N$ to $V_i$.  The stabiliser $G_i$ in $G$ of the subspace $V_i$ is irreducible on $V_i$. Thus $c_p(G_i)\le\frac{\eps_q d_i-1}{p-1}$ by induction where $d_i=\dim(V_i)=d/r$ for each $i$. By definition, the projection maps $N\to N_i$ are surjective for all $i$ and, moreover, $N\leq N_1\times\cdots\times N_r$. It follows by Lemma~\ref{L1}(\ref{L1b}) that $c_p(N)\le \sum_{i=1}^r c_p(N_i)$, and $N_i\lhdeq G_i$ implies $c_p(N_i)\le c_p(G_i)$.  On the other hand, $G/N$ is isomorphic to a subgroup of $\sym(r)$ hence $c_p(G/N)\le (r-1)/(p-1)$ by Lemma~\ref{L1}(\ref{L1d}). By Lemma~\ref{L1}(\ref{L1a}), we have
\begin{align*}
  c_p(G)&=c_p(G/N)+c_p(N)\leq c_p(G/N)+ \sum_{i=1}^r c_p(N_i)\leq c_p(G/N)+ \sum_{i=1}^r c_p(G_i)\\
   &\le \frac{r-1}{p-1}+\sum_{i=1}^r \frac{\eps_qd_i-1}{p-1}=\frac{r-1}{p-1}+\frac{\eps_qd-r}{p-1}=\frac{\eps_qd-1}{p-1},
\end{align*}
as desired. From now on, we assume that $G$ is irreducible and primitive. Therefore every normal subgroup $N$ of $G$ acts completely reducibly, indeed homogeneously.

\subsection{$G\in\C_3$}\label{C3}
In this case, there exists $r\geq 2$ such that $G\leq\GL(d/r,q^r)\rtimes \CC_r$. Let  $N=G\cap \GL(d/r,q^r)$. By the inductive hypothesis, we have $c_p(N)\leq\displaystyle{\frac{\eps_{q^r}(d/r)-1}{p-1}}$.  Furthermore, by Lemma~\ref{Lemma:new}, 
$$c_p(G/N)\leq c_p(\CC_r)=\log_p r_p\le\frac{r-1}{p-1}.$$
If $d=r$, then $N\leqslant \GL(1,q^d)$ and hence $|N|$ is coprime to $p$ and $c_p(N)=0$. By Lemma~\ref{L1}(\ref{L1a}), we have 
\begin{equation*}
  c_p(G)=c_p(G/N)\le\frac{r-1}{p-1}=\frac{d-1}{p-1}\leq\frac{\eps_qd-1}{p-1}.
\end{equation*}
We may thus assume that $d\geq 2r$. Note that 
\begin{equation*}
  c_p(G)=c_p(N)+c_p(G/N)\le\frac{\eps_{q^r}(d/r)-1}{p-1}+\frac{r-1}{p-1}
  =\frac{\eps_{q^r}(d/r)+r-2}{p-1}.
\end{equation*}
It thus suffices to show
\begin{equation}\label{Bris}
\eps_{q^r}(d/r)+r-1\leq \eps_q d.
\end{equation}
Suppose now that $\eps_{q^r}\leq\eps_{q}$. In this case, it suffices to show $\eps_q d/r+r-1\leq \eps_q d$ which is equivalent to  $r-1\leq \eps_qd(r-1)/r$, and hence equivalent to $r\leq \eps_q d$. Since $\eps_q\geq 1$, the latter holds and so we may thus assume that $\eps_q<\eps_{q^r}$. From the definition of $\eps$, it follows that $\eps_q=1$ and $\eps_{q^r}=4/3$.  Therefore,~\eqref{Bris} becomes

\begin{equation*}
\frac{4d}{3r}+r-1\leq d.
\end{equation*}
This is equivalent to $r^2-r\leq d(r-4/3)$ which holds since $d\geq 2r$ and $r\geq 2$.

\subsection{$G\in\C_4$ or $\C_7$}\label{C47}
In this case, $G$ preserves a non-trivial decomposition of $V$ as a tensor product, say $V=V_1\otimes\cdots\otimes V_r$ where $r\geq 2$. If $G\in\C_4$,  then $r=2$ and $G$ fixes $V_1$ and $V_2$, otherwise $G$ permutes the factors $V_1,\dots,V_r$. Note that $d=\prod_{i=1}^r d_i$, where $d_i=\dim(V_i)\geq 2$. Let $N$ be the kernel of the action of $G$  on the set $\{V_1,\ldots, V_r\}$ and let $N_i$ denote the restriction of $N$ to $V_i$. By definition, the projection maps $N\to N_i$ are surjective for all $i$ and, moreover, $N\leq N_1\circ\cdots\circ N_r$. It follows by Lemma~\ref{L1}(\ref{L1c}) that $c_p(N)\le \sum_{i=1}^r c_p(N_i)$. By Clifford's Theorem, a subnormal subgroup of a completely reducible group is completely reducible. Since $N_i \trianglelefteqslant N \trianglelefteqslant G$, $N_i$ is completely reducible on $V$ and on $V_i$. Hence, by the inductive hypothesis, we have $c_p(N_i)\le(\eps_qd_i-1)/(p-1)$. On the other hand, $G/N$ is isomorphic to a subgroup of $\sym(r)$ hence $c_p(G/N)\le (r-1)/(p-1)$ by Lemma~\ref{L1}(\ref{L1d}). By Lemma~\ref{L1}(\ref{L1a}), we have
\begin{align*}
  c_p(G)&=c_p(G/N)+c_p(N)\leq  c_p(G/N)+\sum_{i=1}^r c_p(N_i)\le \frac{r-1}{p-1}+\sum_{i=1}^r \frac{\eps_qd_i-1}{p-1}\\
   &= \frac{\eps_q(\sum_{i=1}^r d_i)-1}{p-1}\le \frac{\eps_q(\prod_{i=1}^r d_i)-1}{p-1}=\frac{\eps_qd-1}{p-1},
\end{align*}
where the last inequality follows from the fact that $d_i\geq 2$ for all $i$. This completes the proof of this case.

\subsection{$G\in\C_5$}\label{C5}
In this case, $G\leqslant \GL(d,q_0)\Z(\GL(d,q))$ where $q=q_0^r$ for some divisor $r$ of $f$ with $r\geq 2$.  Let $G_0=G\cap\GL(d,q_0)$. Note that $c_p(\Z(\GL(d,q)))=0$ hence $c_p(G)=c_p(G_0)$. Since $q_0=p^{f/r}$ and $(d,f/r)<(d,f)$ in our lexicographic ordering, the inductive hypothesis yields $c_p(G_0)\le (\eps_{q_0}d-1)/(p-1)$. Since $q$ is a power of $q_0$, it follows from the definition of $\eps$ that $\eps_{q_0}\leq \eps_q$ and the result follows.

\subsection{$G\in\C_6$}\label{C6}
In this case, $d=r^m$ for some prime $r$ with $r\mid (q-1)$, and $G$ normalises an absolutely irreducible $r$-subgroup $R$ where $R/\Z(R)$ is elementary abelian of rank~$2m$. By~\cite[Proposition~4.6.5]{KL},  the normaliser of $R$ in $\SL(d,q)$ is 
 $$\Z(\SL(d,q))\circ\left(R\cdot\Sp(2m,r)\right).$$
 Since $r\mid(q-1)$, we have $r\neq p$ and  thus $c_p(\Z(\SL(d,q)))=c_p(R)=0$. It follows by Lemma~\ref{L1} that 
$$c_p(G)\le\log_p|\Sp(2m,r)|_p=\log_p\prod_{i=1}^m(r^{2i}-1)_p.$$ 
It thus suffices to show that
\begin{equation}\label{E6.5}
  \log_p\prod_{i=1}^m(r^{2i}-1)_p\le\frac{\eps_qr^m-1}{p-1}.
\end{equation}
Let $\Delta=\prod_{i=1}^m(r^{2i}-1)$. Suppose first that $p=2$ and thus $r\geq 3$. Note that
\begin{equation*}
  \log_2 \Delta_2=\log_2\prod_{i=1}^m (r^{2i}-1)_2<\log_2\prod_{i=1}^m r^{2i}=(m^2+m)\log_2 r.
\end{equation*}
If $(m^2+m)\log_2 r\le r^m-1$, then, clearly,~\eqref{E6.5} holds. We may thus assume that $(m^2+m)\log_2 r> r^m-1$ and it is not hard to see that this implies that $(m,r)$ is one of $(1,3)$, $(1,5)$ or $(2,3)$. If $(m,r)=(1,5)$, then $\log_2\Delta_2=3$ and~\eqref{E6.5} follows by noting that $\eps_q\geq 1$. Finally, if $r=3$ then $q$ must be an even power of $2$ and thus $\eps_q=4/3$ and again~\eqref{E6.5} can be verified directly for $m\in\{1,2\}$.

From now on, we assume that $p\geq 3$. Let $\ell$ be the order of $r^2$ modulo~$p$, that is, the smallest integer $\ell\geq 1$ for which $(r^2)^{\ell}\equiv 1\pmod p$. The key observation which follows from~\cite[Lemma~2.2(i)]{AHN} is that
\begin{equation*}
  (r^{2i}-1)_p=\begin{cases}1&\textup{ if $\ell\nmid i$,}\\
              (r^{2\ell}-1)_p\left(\frac{i}{\ell}\right)_p
            &\textup{ if $\ell\mid i$.}\end{cases}
\end{equation*}
Let $(r^{2\ell}-1)_p=p^e$ and note that $e\geq 1$. Hence
\begin{equation*}
  \Delta_p=\prod_{i=1}^m(r^{2i}-1)_p
  =\prod_{j=1}^{\lfloor m/\ell\rfloor}(r^{2j\ell}-1)_p
  =\prod_{j=1}^{\lfloor m/\ell\rfloor}(r^{2\ell}-1)_pj_p
  =p^{\lfloor m/\ell\rfloor e}\left(\left\lfloor \frac{m}{\ell}\right\rfloor!\right)_p.
\end{equation*}
Thus, by Lemma~\ref{lem:n!},
$\log_p\Delta_p\le\lfloor m/\ell\rfloor e+(\lfloor m/\ell\rfloor-1)/(p-1)$.
To prove~\eqref{E6.5}, it thus suffices to prove
\begin{equation*}
  \lfloor m/\ell\rfloor e+\frac{\lfloor m/\ell\rfloor-1}{p-1}
  \le\frac{\eps_qr^m-1}{p-1}.
\end{equation*}
For this, it is sufficient to show that 
\begin{equation}\label{yayap}
  \lfloor m/\ell\rfloor ep\le \eps_qr^m.
\end{equation}

Suppose first that $p=2^n+1$ is a Fermat prime and $r=2$. In this case $\ell=n$, $e=1$ and $\eps_q=\frac{p}{p-1}$. Hence~\eqref{yayap} becomes $\lfloor m/n \rfloor \le 2^{m-n}$. Writing $\alpha=m/n$, this inequality becomes $\lfloor \alpha \rfloor \le 2^{n(\alpha-1)}$, which holds for all values of $\alpha$ since $n\geq 1$.

We now assume that $p\ge 3$ is not a Fermat prime or $r\geq 3$, and hence that $\eps_q=1$. Since $p^e\mid (r^{2\ell}-1)$, we see that $p^e\mid(r^{\ell}\pm1)$ and hence $p^e\le r^{\ell}+1$. Suppose that equality holds. Since $p\geq 3$, we have $r=2$ hence $p^e-1$ is a power of $2$ and thus so is $p-1$. In other words, $p$ is a Fermat prime, contradicting our assumption. We may thus assume that $p^e\le r^{\ell}$ and hence $e\le\ell \log_p r$. Using~\eqref{yayap}, it suffices to prove $mp\log_p r\le r^m$.

We first consider the subcase when $p\le r^{m/2}$. Under this hypothesis, it suffices to prove $m\log_p r\le r^{m/2}$ and, since $p\geq 3$, even  $m\log_3r\le r^{m/2}$ is sufficient. It is not hard to show that this always holds.

We now assume that $p\geq r^{m/2}+1$ and hence $\ell\geqslant m/2$. If $\ell=m/2$, then $p=r^{m/2}+1$ and hence $r=2$ and $p$ is a Fermat prime, contrary to our hypothesis. Thus $\ell >m/2$. If $\ell>m$, then~\eqref{yayap} clearly holds. We may thus assume that  $m/2<\ell \leqslant m$ and hence $\Delta_p=(r^{2\ell}-1)_p$. 

Suppose that $p^2\mid (r^{2\ell}-1)$. Since $p\geq 3$, this implies that $p^2\mid r^\ell\pm1$ and thus $p^2\le r^\ell+1$. If $p^2=r^\ell+1$, then $r=2$ and $p=3$. We may thus assume that $p^2\le r^\ell$ and hence $p\le r^{\ell/2}\le r^{m/2}$, contrary to our hypothesis. Therefore $p^2\nmid(r^{2\ell}-1)$, and it follows that $\Delta_p=p$. In particular,~\eqref{E6.5} holds since $p\leq r^m$. This concludes the proof of this case.

\subsection{$G\in\C_8$}\label{C8}
In this case $G$ has a normal subgroup $N$ such that $N=\Omega^\varepsilon(d,q)$ for $d$ even or $dq$ odd, $\Sp(d,q)'$ for $d$ even, or $\SU(d,\sqrt{q})$ for $q$ a square.  Moreover, $G$ is contained in $\GO^\varepsilon(d,q)$, $\GSp(d,q)$ or $\GU(d,\sqrt{q})Z$, where $\GO^\varepsilon(d,q)$ and $\GSp(d,q)$ denote the groups of all similarities of the quadratic or alternating form respectively, while $\GU(d,\sqrt{q})$ denotes the group of all isometries of the hermitian form.  By excluding previous cases, we may also assume that  $N/(N\cap Z)$ is nonabelian and simple~\cite[\S VI.1--2]{N}. Thus $c_p(N)=0$ and hence $c_p(G)=c_p(G/N)$.

Now, $|\GU(d,\sqrt{q})Z:\SU(d,\sqrt{q})|=q-1$ which is coprime to $p$ hence $c_p(G)=0$ when $N=\SU(d,\sqrt{q})$. 

Similarly, $|\GSp(d,q):\Sp(d,q)|=q-1$, while $\Sp(d,q)'=\Sp(d,q)$ unless $(d,q)=(4,2)$. Thus, if $N=\Sp(d,q)'$, then $c_p(G)=0$ unless $(d,q)=(4,2)$, in which case $c_p(G)=1$. In both cases,~\eqref{E1} holds.

Finally, $|\GO^\varepsilon(d,q):\Omega^\varepsilon(d,q)|=2(q-1)\gcd(2,d,q-1)$ which is coprime to $p$ unless $p=2$. Thus, if $N=\Omega^\varepsilon(d,q)$, then $c_p(G)=0$ unless $p=2$, in which case $c_p(G)=1$. Again,~\eqref{E1} holds in both cases.

\subsection{$G\in\C_9$}\label{C9}
In this case, $G$ has a normal series $G\rhdeq N\rhd Z\rhdeq 1$ where $G/Z$ is almost simple with socle $N/Z$ and, moreover, $N$ is absolutely irreducible. Let $T=N/Z$. Note that $c_p(Z)=c_p(T)=0$ and thus $c_p(G)=c_p(G/N)$. Note also that $G/N$ is isomorphic to a subgroup of $\Out(T)$. It follows by Lemma~\ref{L1} that $c_p(G/N)\leq\log_p|\Out(T)|_p$.

If $|\Out(T)|_p\leq 2$, then $c_p(G)=c_p(G/N)\le c_p(\Out(T))\le1$ with
equality if and only if $p=2$. Certainly~\eqref{E1} is satisfied if
$p>2$, and it is satisfied when $p=2$ provided $1\le\eps_2d-1$. This is true
as $d\ge2$ and $\eps_2=4/3$. We
may thus assume that $|\Out(T)|_p\geq 3$. This already rules out the
case when $T$ is a sporadic group or an alternating group $\alt(n)$, with
$n\neq 6$. In view of the exceptional isomorphism
$\alt(6)\cong\PSL(2,9)$, we will therefore assume that $T$ is a
nonabelian simple group of Lie type. We rule out the Tits
group~${}^2\mathrm{F}_4(2)'$ as we view it as a sporadic group.

Suppose that $T$ is defined over a field $F'$ of characteristic $p'$ and order $(p')^{f'}$. Let $q'=|F'|=(p')^{f'}$ if $T$ is an untwisted group of Lie type, and $(q')^k=|F'|=(p')^{f'}$ if $T$ is twisted with respect to a graph symmetry of order $k$.

It is well known that $|\Out(T)|=\delta f'\gamma$ where $\delta$ and $\gamma$ are the number of ``diagonal'' and ``graph'' outer automorphisms, respectively (see~\cite[p.\;(xv)]{C} and~\cite[p.(xvi) Table~5]{C}). It follows that $c_p(G)\le\log_p(\delta f'\gamma)_p$. We now split into two cases, according to whether or not $p=p'$.

\subsubsection{$p=p'$}\label{Pablo}
By~\cite[Table~5]{C}, $\delta$ is coprime to $p$ and thus $\delta_p=1$.  We first suppose that $p\leq 3$ and $\gamma_p=1$. Recall that the field automorphisms yield a  cyclic subgroup of $\Out(T)$ of order $f'$, while a Sylow $p$-subgroup of $\GL(d,p^f)$ has exponent $p^{\lceil\log_p d \rceil}$(see~\cite[\S16.5]{HB3}, for example). It follows that $\log_p f'_p\le\lceil\log_p d\rceil$ hence
\begin{equation}\label{NewNew}
  c_p(G)\le\log_p f'_p \le\lceil\log_p d\rceil.
\end{equation}
When $p=2$, we have $\eps_q\ge1$ and
$\lceil\log_2 d\rceil\le d-1$ always holds.
When $p=3$, we have $\eps_q=3/2$  and $\lceil\log_3 d\rceil\le (\eps_qd-1)/2$
always holds. Thus~\eqref{E1} is true in this case.

We may now assume that either $p\geq 5$ or $\gamma_p\neq 1$. In particular, $T$ is neither a Suzuki group nor a Ree group (these have $p\le3$ and $\gamma=1$). By~\cite[p.\,(xv)]{C}, $\Out(T)$ has the form $(O_D\rtimes O_F)\rtimes O_G$ where $O_D,O_F,O_G$ denote groups of diagonal, field, and graph outer automorphisms, respectively. Conjugation induces on $N/Z\cong T$ a homomorphism $\overline{\phantom{n}}\colon G\to\Out(T)$, with kernel containing~$N$. We must bound $c_p(G)=c_p(\overline{G})$. Since $\delta_p=1$,
$O_D$ is a $p'$-group. Write $|\overline{G}\cap(O_D\rtimes O_F)|_p=p^\ell$. Then $c_p(G)\leqslant\ell+\log_p\gamma_p$ where $\log\gamma_p\le 1$ for $p\le3$, and $\log\gamma_p=0$ otherwise. We digress from bounding $c_p(G)$ (for three paragraphs) to show that $G$ contains an element of order $p^{\ell+1}$. This is trivially true if $\ell=0$ so assume that $\ell\geqslant 1$.

Choose $H\leqslant G$ such that $N\leqslant H$, $\overline{H}\leqslant O_D\rtimes O_F$, and $|H:N|=p^\ell$. Since $O_F$ is cyclic and $|O_D|_p=\delta_p=1$, Sylow's Theorem implies that  $H/Z$ is unique up to isomorphism. Thus we may assume that $H=\langle N,\varphi\rangle$, where the automorphism $\widetilde{\varphi}\in\Aut(T)$  induced by $\varphi$ on $T=N/Z$ is a standard field automorphism of order $p^\ell$.



Suppose first that $T$ is an untwisted group of Lie type.  Since $\widetilde{\varphi}$ is a standard field automorphism there is a root system  $\Phi$ for $T$ such that $T$ is generated by the set of all root elements  $x_r(\lambda)$ for $r\in \Phi$ and $\lambda\in F'$, and there is an automorphism $\psi$  of the field $F'$ of order $p^\ell$ such that $\widetilde{\varphi}\in\Aut(T)$ maps each $x_r(\lambda)$ to $x_r(\lambda^\psi)$ (see \cite{carter}).
Let $(F')^\psi$ be the fixed subfield of $\psi$ and let $\mathrm{Tr}\colon F'\to(F')^\psi$ be the (surjective) trace map $\mathrm{Tr}(\lambda)=\sum_{i=0}^{p^\ell-1} \lambda^{\psi^i}$. Calculating in $\Aut(T)$, with $T$ identified with $\Inn(T)$, we have
\begin{align*}
(\widetilde{\varphi} x_r(\lambda))^{p^\ell} & = (x_r(\lambda))^{\psi^{p^\ell-1}}(x_r(\lambda))^{\psi^{p^\ell-2}}\cdots (x_r(\lambda))^\psi x_r(\lambda)\\
                            &= x_r(\lambda^{\psi^{p^\ell-1}})x_r(\lambda^{\psi^{p^\ell-2}})\cdots x_r(\lambda^\psi)x_r(\lambda)\\
                           &=x_r(\lambda^{\psi^{p^\ell-1}}+\lambda^{\psi^{p^\ell-2}}+\cdots + \lambda^\psi+\lambda)=x_r(\mathrm{Tr}(\lambda)).
\end{align*}
Choosing $\lambda\in F'$ such that $\mathrm{Tr}(\lambda)\neq 0$ yields an element $\widetilde{\varphi}x_r(\lambda)$ of order $p^{\ell+1}$. Thus $H$, and hence $G$, has an element of
order $p^{\ell+1}$, as desired.

Suppose now that $T$ is a twisted group of Lie type  arising from an untwisted group $L$ with root system $\Phi$. Since $T$ is twisted, $\gamma=1$ hence $p\geq 5$ and  all roots in a fundamental system for $\Phi$ have the same length. Moreover, there is a graph automorphism $\rho$ of order $k$ arising from a symmetry of the Dynkin diagram of $L$ and a field automorphism $\sigma$ of order $k$ such that $T$ is the centraliser in $L$ of the automorphism $\rho\sigma$. By \cite[Proposition 13.6.3]{carter}, if $k=2$ and $T\neq \PSU(3,q')$, then there is a root $r$ with image $\overline{r}$ under the symmetry of the Dynkin diagram such that, for all $\lambda\in F'$, the element $x_S(\lambda):=x_r(\lambda)x_{\overline{r}}(\lambda^\sigma)$ lies in $T$. Similarly, if $k=3$, then there is a root $r$ with images $\overline{r}$ and $\overline{\overline{r}}$ such that, for all $\lambda\in F'$, the element $x_S(\lambda)=x_r(\lambda)x_{\overline{r}}(\lambda^\sigma)x_{\overline{\overline{r}}}(\lambda^{\sigma^2})$ lies in $T$. In both cases, a calculation similar to the earlier one shows that $(\widetilde{\varphi} x_S(\lambda))^{p^\ell}=x_S(\mathrm{Tr}(\lambda))$ and hence, by choosing $\lambda$ appropriately, we ensure that $\widetilde{\varphi} x_S(\lambda)$ has order $p^{\ell+1}$. Finally, if $T=\PSU(3,q')$, then, for a simple root $r$, we have $r+\overline{r}\in\Phi$ and hence $T$ contains elements $x_{r+\overline{r}}(\lambda)$ for all $\lambda$ in the index $2$ subfield of $F'$ fixed by the field automorphism of order 2. Since $p$ is odd, such a subfield contains elements with nonzero trace and we again find an element $\widetilde{\varphi} x_{r+\overline{r}}(\lambda)$ of order $p^{\ell+1}$.

We have shown that, in all cases, $G$ contains an element of order $p^{\ell+1}$.  Recall that a Sylow $p$-subgroup of $\GL(d,p^f)$ has exponent $p^m$  where $p^{m-1}<d\le p^m$. Thus  $\ell+1 \leqslant m$ and 
$$
  \ell\leqslant m-1 \leqslant\frac{p^{m-1}-1}{p-1}<\frac{d-1}{p-1}.
$$
In particular,~\eqref{E1} holds if  $c_p(G)=\ell$. We may thus assume that  $c_p(G)>\ell$ which implies that $\gamma_p\neq 1$ and $p\leq 3$. By~\cite[Table~5]{C}, $\gamma$ divides~$6$ hence  $\log_p\gamma_p=1$. It follows that
$$
  c_p(G)=\ell+1\leqslant m =\lceil\log_p d\rceil
$$
but, as we saw earlier in the sentences following~\eqref{NewNew}, this implies~\eqref{E1} when $p\leq 3$.

\subsubsection{$p\neq p'$.}
In this case, we have an  absolutely irreducible cross-characteristic representation $N\to\GL(d,q)$. This gives rise to a projective representation $T\to\PGL(d,q)$ and Landazuri and Seitz~\cite[Theorem]{LS} give lower bounds on $d$ with respect to $q'$.  Furthermore, possibilities for quasisimple groups $N$ and small dimensions $d$ are listed in~\cites{HM1,HM2}.

We first assume that $d\le5$. Suppose that $T\cong\PSL(2,q')$. By~\cite[Table~2]{HM1}, we have $d\in\{q',q'\pm1,(q'\pm1)/2\}$. Since $d\le 5$, this implies that $q'\le11$ and $q'\ne8$ and, as $|F'|=q'=(p')^{f'}$, we see that $f'\le2$. Since $|\Out(T)|$ divides $2f'$ and $|\Out(T)|_p\ge3$, it follows that $p=f'=2$. As $p'\ne p$, this implies that $q'=9$ and thus $d\ge(9-1)/2=4$ hence~\eqref{E1} holds.  Suppose now that $T$ is a group of Lie type other than $\PSL(2,q')$. By~\cite[Table~2]{HM2}, the possible choices for $T$ with $d\le5$ are $\PSL(3,4)$ and $\PSU(4,2)$ with $|\Out(T)|$ being $12$ and $2$, respectively. As $p\neq p'$ and $|\Out(T)|_p\geq 3$, we have $|\Out(T)|_p=p=3$ and $c_p(G)\leq 1$ hence~\eqref{E1} holds. We henceforth assume that $d\ge6$.

Suppose first that $\delta\geq 5$. This implies that $T=\PSL(n,q')$ or $\PSU(n,q')$ and $n\geq 4$. It follows by~\cite[Theorem]{LS} that 
\begin{equation}\label{yoyoya}
d\geq \frac{q'((q')^4-1)}{q'+1}=q'(q'-1)((q')^2+1).
\end{equation}
(Note that the exceptions for $\PSL(n,q')$ and $\PSU(n,q')$ in~\cite[Theorem]{LS} do not arise because $n\geq 4$.) Since $T=\PSL(n,q')$ or $\PSU(n,q')$, it follows by~\cite[Table~5]{C} that $\delta=\gcd(n+1,q'\pm 1)\le q'+1$ and thus
\begin{equation}\label{yoyoy}
d\geq q'(q'-1)((q')^2+1)\geq (\delta-1)(\delta-2)\delta\geq 12\delta.
\end{equation}
Similarly,~\eqref{yoyoya} implies $d\geq (q')^3$. As $(p')^{f'}=(q')^k$ for some $k\le2$, we have
\begin{equation}\label{yoyoyi}
f'=k\log_{p'} q'\leq2\log_{p'} q'\leq2\log_2 q'\leq2\log_2 d^{1/3}\leq 2(d-1)/3.
\end{equation}
Combining~\eqref{yoyoy} and~\eqref{yoyoyi} gives $\delta+f'\leq d-1$ and thus
\begin{align*}
  c_p(G)\le\log_p\gamma_p+\log_p\delta_p+\log_p f'_p&\le\frac{(p-1)\log_p\gamma_p}{p-1}+\frac{\delta-1}{p-1}+\frac{f'-1}{p-1}\\
  &\le\frac{(p-1)\log_p\gamma_p+d-3}{p-1}.
\end{align*}
If $p\geq 5$, then $\gamma_p=1$ and thus~\eqref{E1} holds. If $p\leq 3$, then $\log_p\gamma_p\leq 1$ and $p-1\leq 2$ and again~\eqref{E1} holds.

We may thus assume that $\delta\leq 4$. We will show that $f'\leq \log_{p'}(d+1)^2$. First, suppose that $k=1$. It follows by~\cite[Theorem]{LS} that $d\geq(q'-1)/2$. (As $d\geq 6$, we can assume that $q'>13$, which rules out the exceptional cases in~\cite[Theorem]{LS}.) This implies that 
$$ f'=\log_{p'}q'\le\log_{p'}(2d+1)< \log_{p'}(d+1)^2.$$
Next, if $k=2$, then~\cite[Theorem]{LS} implies that $d\geq q'-1$. This implies that
$$ f'=2\log_{p'}q'\le2\log_{p'}(d+1)=\log_{p'}(d+1)^2.$$
Finally, if $k=3$, then $d\geq (q')^3$ by~\cite[Theorem]{LS} and
$$ f'=3\log_{p'}q'\le3\log_{p'}d^{1/3}<\log_{p'}(d+1)^2.$$
This completes our proof that $f'\leq \log_{p'}(d+1)^2$.

Suppose first that $p\geq 5$.  By~\cite[Table~5]{C}, we have $\gamma_p=1$ and $\delta\leq 4$ implies that $\delta_p=1$. As $d\geq 6$, we have $f'\leq \log_{p'}(d+1)^2\leq \log_{2}(d+1)^2\leq d$. It follows that
$$
  c_p(G)\le\log_p f'_p\le\frac{f'-1}{p-1}\le\frac{d-1}{p-1}\le\frac{\eps_q d-1}{p-1},
$$
as desired. For $p=3$, we have
$$
 c_p(G)\le\log_3\gamma_3+\log_3\delta_3+\log_3 f'\le 1+1+\log_3\log_2(d+1)^2.
$$
It is not hard to see that $2+\log_3\log_2(d+1)^2\leq \frac{(3/2)d-1}{2}=\frac{\eps_3 d-1}{3-1}$ when $d\geq 6$. This completes the case $p=3$. Finally, suppose that $p=2$ and thus $p'\geq 3$. It follows that
$$
 c_p(G)\le\log_2\gamma_2+\log_2\delta_2+\log_2 f'\le 1+2+\log_2\log_3(d+1)^2.
$$
Again, it is not hard to see that $3+\log_2\log_3(d+1)^2\leq d-1$ when $d\geq 6$, establishing the case $p=2$. This completes the induction and thus the proof. \hfill\qed

\begin{corollary}\label{Cor2}
Let $V=(\F_q)^d$ be the natural module for $G\le\GL(d,q)$ where $q=p^f$. If $V$ has
a composition series with $r$ simple factors, and $\eps_q$ is defined
by~\eqref{E1}, then
\[
  c_p(G/O_p(G))\le \frac{\eps_q d-r}{p-1}.
\]
\end{corollary}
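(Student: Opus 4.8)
The plan is to reduce the statement to Theorem~\ref{T1} by passing to the semisimplification of the module $V$. First I would fix a composition series $0 = V_0 < V_1 < \cdots < V_r = V$ of $V$ as a $G$-module, with simple factors $W_i = V_i/V_{i-1}$, and form the completely reducible $G$-module $\overline{V} = \bigoplus_{i=1}^r W_i$, which has dimension $d$ and exactly $r$ irreducible components. Writing $K$ for the kernel of the resulting action $G \to \GL(\overline{V}) \cong \GL(d,q)$ and $\overline{G} = G/K$ for its image, the module $\overline{V}$ is a direct sum of $r$ irreducible $\overline{G}$-modules, so Theorem~\ref{T1} immediately gives $c_p(\overline{G}) \le (\eps_q d - r)/(p-1)$.

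The heart of the argument, and the step I expect to be the main obstacle, is to verify that $K \le O_p(G)$. Choosing a basis of $V$ adapted to the flag $V_0 < \cdots < V_r$, each $g \in K$ acts as the identity on every factor $W_i$, so it is represented by a block upper-triangular matrix with identity diagonal blocks, hence is unipotent. All such matrices form the unipotent radical of the parabolic subgroup of $\GL(d,q)$ stabilising the flag; since $q$ is a power of $p$, this group has $p$-power order, so $K$ is a $p$-group. As $K$ is normal in $G$ (being the kernel of a homomorphism), it lies in $O_p(G)$. This is precisely the point where the hypothesis that the characteristic of the field equals $p$ is essential, and it is the only nonroutine ingredient.

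Finally I would compare $c_p(G/O_p(G))$ with $c_p(\overline{G})$. Since $O_p(G)$ is a $p$-group containing $K$, the quotient $M = O_p(G)/K$ is a normal $p$-subgroup of $\overline{G}$ with $\overline{G}/M \cong G/O_p(G)$. Applying Lemma~\ref{L1}(\ref{L1a}) to the series $1 \lhdeq M \lhdeq \overline{G}$ gives $c_p(\overline{G}) = c_p(\overline{G}/M) + c_p(M) \ge c_p(G/O_p(G))$. Combining this with the bound from Theorem~\ref{T1} yields $c_p(G/O_p(G)) \le c_p(\overline{G}) \le (\eps_q d - r)/(p-1)$, as required. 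Apart from the $p$-group claim for $K$, everything is bookkeeping with Lemma~\ref{L1}(\ref{L1a}).
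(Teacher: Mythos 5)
Your proof is correct and takes essentially the same route as the paper: both pass from $V$ to the action of $G$ on the composition factors (the semisimplification) and invoke Theorem~\ref{T1}. The only differences are cosmetic: the paper applies Theorem~\ref{T1} to each irreducible image $G_i\le\GL(W_i)$ separately and sums via Lemma~\ref{L1}(\ref{L1b}), and identifies the kernel as exactly $O_p(G)$ (using that $O_p(G_i)=1$ for irreducible $G_i$), whereas you apply the $r$-component form of Theorem~\ref{T1} once to $\overline{G}\le\GL(d,q)$ and get by with the single inclusion $K\le O_p(G)$ (kernel is unipotent, hence a normal $p$-group) together with monotonicity of $c_p$ under quotients from Lemma~\ref{L1}(\ref{L1a}).
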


\begin{proof}
Fix a composition series $V>V_1>\dots>V_r=\{0\}$ for $V$ and consider the
homomorphism $\phi\colon G\to\prod_{i=1}^r\GL(W_i)$ where $W_i:=V_{i-1}/V_i$
for $1\le i\le r$. 
Let $G_i$ be the subgroup of $\GL(W_i)$ induced by $G$. Then
$G_i$ acts irreducibly on $W_i$. Hence the largest normal $p$-subgroup
$O_p(G_i)$ of $G_i$ is trivial. (Note that $[O_p(G_i),W_i]$ is $G_i$-invariant
and $[O_p(G_i),W_i]<W_i$, so $[O_p(G_i),W_i]=\{0\}$.) It follows that $\ker(\phi)=O_p(G)$.

We have $d=d_1+\cdots+d_r$ where $d_i=\dim(W_i)$. Applying
Theorem~\ref{T1} gives
\[
  c_p(G/O_p(G))=c_p(G/\ker(\phi))=c_p(\textup{im}(\phi))\le\sum_{i=1}^r c_p(G_i)
  \le\sum_{i=1}^r\frac{\eps_q d_i-1}{p-1}=\frac{\eps_q d-r}{p-1}.\qedhere
\]
\end{proof}

\bigskip

\noindent\textsc{Acknowledgements.}
We would like to thank Pablo Spiga for some of the ideas in Section~\ref{Pablo}.
We thank the referee for a number of helpful suggestions including
the above corollary.

\end{document}